\newcommand{\sub}{\subseteq}
\newcommand{\ra}{\Rightarrow}
\newcommand{\al}{\alpha}
\newcommand{\be}{\beta}
\DeclareMathOperator{\Max}{Max}
\newtheorem{theorem}{Theorem}
\newtheorem{lemma}[theorem]{Lemma}
\newtheorem{proposition}[theorem]{Proposition}
\newtheorem{corollary}[theorem]{Corollary}
\newtheorem{Qu}{Problem}
\theoremstyle{definition}
\begin{document}

  \title{Locally principal ideals and finite character}

%\subjclass{Primary: 13C10; Secondary: 13A15, 13G05.}

\keywords{}

\author{Stefania Gabelli}

\address{Dipartimento di Matematica, Universit\`{a} degli Studi Roma
Tre,
Largo S.  L.  Murialdo,
1, 00146 Roma, Italy
}

\email{gabelli@mat.uniroma3.it}

\date{}

%%%%%%%%%%%%%%%%%%%%%%%%%%%%%%%%%%%%%%%%%%

%%%%%%%%%% ABSTRACT

\begin{abstract} It is well-known that if $R$ is a domain with finite character,  each locally principal nonzero  ideal of  $R$ is invertible. We address the problem of understanding when the converse is true and survey some recent results.
 \end{abstract}

%%%%%%%%%%%%%%%%%%%%%%%%%%%%%%%%%%%%%%%%%%%

\maketitle

\section*{}

 Throughout all the paper, $R$ will be an integral domain and $K$ its field of fractions. To avoid trivialities, we will assume that $R\neq K$. 
  
 An $R$-submodule $M$ of $K$ is called \emph{fractional} if $(R:M):=\{x\in K;\, xM\sub R\} \neq (0)$, equivalently, if there exists a nonzero $d\in R$ such that $dM\sub R$. If there is such a $d$, then $dM:=J$ is an ideal of $R$ and $M=d^{-1}J$; thus $M$ is also called  a \emph{fractional ideal}. 
 
  If $I$ is a fractional ideal of $R$, we call $I$ simply an \emph{ideal}  and if $I\sub R$ we say that $I$ is an \emph{integral ideal}. 
  
  An \emph{overring} of $R$ is a domain $D$ such that $R\sub D\sub K$. It is easy to see that $D$ is a fractional overring if and only if $D$ is the endomorphism ring of a nonzero integral ideal $I$ of $R$, that is, $D=E(I):=(I:I):=\{x\in K;\, xI\sub I\}$. In fact, if $I\sub R$ is a nonzero ideal, $E(I):=(I:I)$ is an overring and $(0) \neq I\sub (R:E(I))$. Conversely, if $D$ is a fractional overring and $I:=dD\sub R$,  $d\neq 0$, we have that $I$ is an ideal and $D=E(I):=(I:I)$.

An ideal $I$ of $R$ is called \emph{locally principal} if $I_M:=IR_M$ is principal, for each maximal ideal $M$ of $R$. Note that, if $I$ is a locally principal nonzero ideal, its endomorphism ring $E(I):=(I:I)$ is equal to $R$. In fact $(I_M:I_M)=R_M$, for each $M\in \Max(R)$. Hence, 
$$E(I)=(I:I)=\bigcap_{M\in \Max(R)} (I:I)_M=\bigcap_{M\in \Max(R)} (I_M:I_M)= \bigcap_{M\in \Max(R)} R_M =R.$$

  By checking it locally, it is also easy to see that a locally principal nonzero  ideal is  \emph{cancellative}, that is, whenever $IB = IC$ for ideals $B$ and $C$ of $R$, then $B = C$ \cite[Section 6]{gilmer}. 
  As proved by D. D. Anderson and M. Roitman, the converse is also true. 
  
 \begin{proposition} \cite[Theorem]{AR} A nonzero ideal $I$ of $R$ is cancellative if and only if it is locally principal. \end{proposition}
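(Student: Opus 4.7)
The easy direction---that a locally principal nonzero ideal is cancellative---is already recorded above the statement, so the content of the proposition is the converse. Assume $I$ is a nonzero cancellative ideal of $R$. I would argue by contrapositive: if $IR_M$ fails to be principal for some $M\in\Max(R)$, produce ideals $B\ne C$ of $R$ with $IB=IC$.

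A useful preliminary is the identity $(I:I)=R$. Since $I\cdot(I:I)=I=R\cdot I$, picking any nonzero $d\in R$ with $d(I:I)\sub R$ and applying cancellativity to the integral ideals $d(I:I)$ and $dR$ forces $d(I:I)=dR$, whence $(I:I)=R$. Equivalently, $xI=I$ for $x\in K$ forces $x$ to be a unit of $R$. This is the key structural consequence of cancellativity that feeds into the main argument.

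Next, since a generator of the principal ideal $IR_M$ may always be chosen inside $I$, $IR_M$ is principal if and only if there exists $a\in I$ with $I\sub aR_M$, equivalently, such that $b\in aR_M$ for every $b\in I$. Assuming $IR_M$ is not principal, for every $a\in I$ there is some $b=b(a)\in I$ with $b\notin aR_M$. The plan is to feed such a non-dividing pair $(a,b)$---together with the integral colon ideals $(aR:_R bR)$, $(bR:_R aR)$, $(aR:_R I)$, the sum $aR+bR$, the intersection $aR\cap bR$, and the identity $(I:I)=R$---into an explicit construction of two ideals $B\ne C$ of $R$ whose products with $I$ coincide, contradicting cancellativity.

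The main obstacle is producing the collapsing pair $(B,C)$ uniformly. In the finitely generated case one can reduce to an invertibility-style argument that identifies $(aR:_R I)\cdot I$ with $aR$ and then inverts, but in general $I$ need not be finitely generated and that trace identity can fail. The construction of Anderson--Roitman addresses this by working with finite subideals of $I$ and exploiting $(I:I)=R$ to manufacture $(B,C)$ in every case. Once such a pair is produced whenever $IR_M$ fails to be principal, the contrapositive yields local principality at every maximal ideal, completing the proof.
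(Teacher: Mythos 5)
The paper itself offers no proof of this proposition: it is quoted verbatim from Anderson--Roitman \cite{AR}, so there is no internal argument to compare yours with. Judged on its own terms, your write-up is an outline rather than a proof. The preliminary step is correct and complete: from $I(I:I)=I=IR$, clearing denominators to get the integral ideals $d(I:I)$ and $dR$ and cancelling $I$ does give $(I:I)=R$, and hence that $xI=I$ forces $x$ to be a unit. But the entire content of the hard direction --- producing, from the failure of principality of $IR_M$, two ideals $B\neq C$ with $IB=IC$, or equivalently deducing principality of $IR_M$ directly from cancellativity --- is never carried out. You list the objects you intend to feed into ``an explicit construction'' and then state that ``the construction of Anderson--Roitman addresses this.'' That construction \emph{is} the theorem; deferring it to the reference being proved leaves nothing but the easy half and the identity $(I:I)=R$.

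To be concrete about what is missing: the standard route is first to show that for all $a,b\in I$ one has $ab\in(a^2,b^2)$ (obtained by cancelling a suitable power of $I$ from a product identity), which in the local ring $R_M$ forces $aR_M$ and $bR_M$ to be comparable, so that $\{aR_M:\ a\in I\}$ is a chain. Even then one is not done: a union of a chain of principal ideals need not be principal (the maximal ideal of a rank-one nondiscrete valuation domain is such a union and is not cancellative in the relevant sense), so a further use of cancellation is required to produce a largest element of the chain. None of these steps appears in your proposal, and the ingredients you list ($(aR:_RbR)$, $aR+bR$, $aR\cap bR$, $(I:I)=R$) do not by themselves indicate how the collapsing pair $(B,C)$ is to be built. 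As written, the argument establishes only the direction already recorded in the paper plus the (correct, but ancillary) fact that $E(I)=R$ for cancellative $I$.
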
 
 
 The set of nonzero ideals of $R$ is a multiplicative semigroup,  with unity $R$.  A nonzero ideal $I$ is \emph{invertible} if $IJ=R$, for some ideal $J$. If $I$ is invertible, its inverse is $(R:I):=\{x\in K; \,xI\sub R\}$ \cite[Section 7]{gilmer}.  
Clearly an invertible ideal is cancellative, hence locally principal. 

For a converse, we need that $I$ be finitely generated.
In fact, if $IJ=R$ and $1=a_1b_1+ \dots + a_nb_n$, $a_i\in I, b_i\in J$, the ideal $I$ is generated by $a_1, \dots, a_n$. For a discussion on the minimal number of generators of an invertible ideal one can see \cite{OR}.

\begin{proposition} \cite[Corollary 7.5]{gilmer} \label{inv} A nonzero ideal $I$ of $R$ is invertible if and only if it is finitely generated and locally principal.
\end{proposition}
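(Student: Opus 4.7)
The plan is to treat the two implications separately, since the ``only if'' direction is already essentially contained in the discussion preceding the statement, while the ``if'' direction needs a genuine local-to-global argument.

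For the forward direction I would simply invoke material already in the excerpt: an invertible ideal is cancellative, hence locally principal by the Anderson--Roitman proposition; and the paragraph before the statement already observes that if $IJ=R$ with $1=a_1b_1+\cdots+a_nb_n$ then $I=(a_1,\ldots,a_n)$. So there is nothing further to do here.

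For the converse, assume $I=(a_1,\ldots,a_n)$ and $I_M$ is principal for every $M\in\Max(R)$. The plan is to show that $I(R:I)=R$ by proving that $I(R:I)\not\sub M$ for every maximal ideal $M$. Fix $M$. Since $R_M$ is local and $I_M=bR_M$ for some $b$, a standard local argument (write $b=\sum d_j a_j$ and $a_j=bc_j$ with coefficients in $R_M$, then $1=\sum d_jc_j$ forces some $d_jc_j$, hence some $c_j$, to be a unit in $R_M$) shows that $I_M=a_iR_M$ for at least one index $i$. Consequently, for each $j$ there exists $s_j\in R\sm M$ with $s_ja_j\in a_iR$.

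Setting $s:=\prod_j s_j\in R\sm M$, one checks that $s/a_i\in (R:I)$: indeed, for every $j$,
$$
\frac{s}{a_i}\,a_j=\Bigl(\prod_{k\neq j}s_k\Bigr)\cdot\frac{s_ja_j}{a_i}\in R.
$$
Hence $s=(s/a_i)\cdot a_i\in I(R:I)$ and $s\notin M$, so $I(R:I)\not\sub M$. Since $M$ was arbitrary, $I(R:I)=R$ and $I$ is invertible.

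The only step with any real content is the local one, namely that when $I$ is finitely generated and $I_M$ is principal, some generator $a_i$ already generates $I_M$; everything else is a formal manipulation of $(R:I)$. I would expect that to be the step most prone to sloppiness, since one has to use the locality of $R_M$ (units in a local ring are detected by not lying in the maximal ideal) rather than merely the principal-ness of $I_M$.
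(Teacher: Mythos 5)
Your argument is correct. Note, though, that the paper does not actually prove this proposition; it cites \cite[Corollary 7.5]{gilmer}, and the proof it implicitly has in mind is the one visible in Proposition \ref{prop1}: for finitely generated $I$ one gets $(R:I)_M=(R_M:I_M)$ ``by flatness,'' and then
$$I(R:I)=\bigcap_{M\in \Max(R)}I_M(R_M:I_M)=\bigcap_{M\in \Max(R)}R_M=R.$$
You take a genuinely different and more elementary route: instead of invoking the compatibility of $(R:-)$ with localization and the local--global intersection formula, you exhibit, for each maximal ideal $M$, an explicit element $s=(s/a_i)\cdot a_i$ of $I(R:I)$ lying outside $M$. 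Your local step (that some single generator $a_i$ already generates $I_M$, via $1=\sum d_jc_j$ forcing some $c_j$ to be a unit in the local ring $R_M$) is the real content and is handled correctly; the clearing of denominators with $s=\prod_j s_j$ is also sound, using only that $M$ is prime. What the paper's route buys is uniformity: the flatness identity $(R:I)_M=(R_M:I_M)$ is exactly the hypothesis isolated in Proposition \ref{prop1} and reused for the $v$-finite case in Proposition \ref{And1}, so the finitely generated case becomes one instance of a general criterion. What your route buys is self-containedness: it avoids both the flatness fact and the identity $J=\bigcap_M J_M$, and is essentially the classical textbook proof. Your forward direction is fine as well, though routing ``invertible $\Rightarrow$ locally principal'' through the Anderson--Roitman characterization of cancellation ideals is heavier than necessary; localizing $1=\sum a_ib_i$ directly shows some $a_i$ generates $I_M$.
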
 

Thus, a nonzero finitely generated  ideal is invertible if and only if is locally principal, if and only if is cancellative. In particular, locally principal nonzero  ideals of Noetherian domains are invertible.

However, a locally principal ideal need not be finitely generated. In fact there are examples of domains such that $R_M$ is a one-dimensional  discrete valuation domain, for each $M\in \Max(R)$ (these domains are called \emph{almost Dedekind domains}), yet some of the maximal ideals are not finitely generated \cite[Section 36]{gilmer}. 

Thus one can ask:

\begin{Qu} \label{Q1} When is a locally principal nonzero ideal invertible (equivalently finitely generated)? \end{Qu}

A first answer is the following.

\begin{proposition} \cite[Theorem 3]{AZ}  \label{prop1} Let $I$ be a locally principal nonzero ideal of $R$. Then 
$I$ is invertible if and only if $(R:I)_M=(R_M:I_M)$, for each  $M\in \Max(R)$.
\end{proposition}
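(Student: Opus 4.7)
The proof splits naturally along the biconditional, and the local principality of $I$ is the pivot in both directions.

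For the forward direction, my plan is to exploit the fact (recalled just before Proposition~\ref{inv}) that an invertible ideal is finitely generated, together with the general principle that the operation $(R:-)$ commutes with localization when applied to a finitely generated ideal. Concretely, if $I=(a_1,\dots,a_n)$ then
$$(R:I)=\bigcap_{i=1}^n a_i^{-1}R,$$
and since finite intersections of fractional $R$-submodules of $K$ commute with localization at any multiplicative set, one gets
$$(R:I)_M=\bigcap_{i=1}^n a_i^{-1}R_M=(R_M:I_M)$$
for every $M\in\Max(R)$.

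The reverse direction is where the hypothesis does real work. Starting from a locally principal $I$, at each $M\in\Max(R)$ we have $I_M=xR_M$ for some $x\in K^\times$, so $(R_M:I_M)=x^{-1}R_M$ and therefore $I_M(R_M:I_M)=R_M$. Combined with the standing hypothesis $(R:I)_M=(R_M:I_M)$ and the routine fact that $(AB)_M=A_MB_M$ for any two fractional ideals $A,B$, this yields
$$\bigl(I(R:I)\bigr)_M=I_M\,(R:I)_M=I_M(R_M:I_M)=R_M$$
at every maximal $M$. Since $I(R:I)$ is an integral ideal of $R$, and any proper integral ideal is contained in some maximal ideal $M_0$ (whence $(I(R:I))_{M_0}\sub M_0R_{M_0}\neq R_{M_0}$), we must conclude $I(R:I)=R$, that is, $I$ is invertible.

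I do not anticipate a serious obstacle: the only nonroutine ingredients are the commutation of finite intersections of submodules of $K$ with localization (needed for $\Rightarrow$) and the standard local-global criterion for an integral ideal to be the whole ring (needed for $\Leftarrow$). The hypothesis $(R:I)_M=(R_M:I_M)$ is precisely what bridges the gap between ``$I_M$ is invertible in $R_M$ for every $M$'' (which is free from local principality) and the global statement ``$I(R:I)=R$.''
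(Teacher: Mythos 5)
Your proof is correct and follows essentially the same route as the paper: invertible $\Rightarrow$ finitely generated $\Rightarrow$ the colon commutes with localization (the paper just says ``by flatness''), and conversely a local computation showing $I(R:I)$ is locally all of $R_M$. The only cosmetic difference is that the paper concludes via the identity $I(R:I)=\bigcap_{M}I(R:I)R_M$ while you invoke the equivalent fact that a proper integral ideal lies in some maximal ideal; both are the same local--global principle.
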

\begin{proof} If $I$ is invertible, it is finitely generated. Hence by flatness $(R:I)_M=(R_M:I_M)$. 
Conversely, if $I$ is locally principal and  $(R:I)_M=(R_M:I_M)$, we have:
$$I(R:I)=\bigcap_{M\in \Max(R)}I(R:I)R_M= \bigcap_{M\in \Max(R)}I_M(R_M:I_M)=\bigcap_{M\in \Max(R)}R_M=R.$$ \end{proof}

Following D.D. Anderson and M. Zafrullah, for short we call $R$ an \emph{LPI-domain} (or say that $R$ has the \emph{LPI-property}) if each locally principal nonzero ideal of $R$ is invertible  \cite{AZ}.

\begin{corollary} If $(R:I)_M=(R_M:I_M)$, for each nonzero ideal $I$ and each maximal ideal $M$ of $R$, then $R$ is an LPI-domain.
\end{corollary}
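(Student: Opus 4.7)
The plan is to observe that the hypothesis of the corollary is merely a uniform ``all nonzero ideals'' version of the condition appearing in Proposition \ref{prop1}, and so the result is an immediate specialization. No separate argument is really needed; the corollary is essentially a packaging of Proposition \ref{prop1}.

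In detail, I would start by fixing an arbitrary locally principal nonzero ideal $I$ of $R$ and checking the hypotheses of Proposition \ref{prop1} for this particular $I$. Since the assumption of the corollary says $(R:J)_M = (R_M:J_M)$ for \emph{every} nonzero ideal $J$ and every maximal ideal $M$, it holds in particular for $J=I$. Thus the second condition in Proposition \ref{prop1} is satisfied, and I would then simply invoke that proposition to conclude that $I$ is invertible. As $I$ was an arbitrary locally principal nonzero ideal, $R$ satisfies the LPI-property by definition.

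There is essentially no obstacle: the only substantive content is Proposition \ref{prop1}, which is already proved. The corollary is a convenience, recording that a natural condition on localizations of duals — sometimes called the \emph{compatibility of the dual with localization at maximal ideals} — is sufficient (though not necessary) for the LPI-property. If one wanted to add any remark, it would be that this hypothesis is, for instance, automatically verified whenever $R$ is locally Noetherian or, more generally, whenever $R_M$ is a flat $R$-module with $I$ finitely generated — but such refinements go beyond what the statement requires.
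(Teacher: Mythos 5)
Your proposal is correct and matches the paper's intent exactly: the corollary is stated without proof precisely because it is the immediate specialization of Proposition \ref{prop1} to each locally principal nonzero ideal, which is what you carry out. Nothing is missing.
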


 If $\{R_\al\}$ is a family of overrings of $R$ such that $R= \bigcap_\al R_\al$, we say that the intersection $\bigcap_\al R_\al$ has \emph{finite character} if each nonzero element  of $R$ is not invertible  in  at most finitely many $R_\al$. If the intersection $R=\bigcap_{M\in \Max(R)}R_M$ has finite character, we say that $R$ has finite character.

An \emph{$h$-local} domain is a domain with finite character such that each nonzero prime ideal is contained in a unique maximal ideal. If $R$ is $h$-local,  we have $(X:Y)_M=(X_M:Y_M)$, for each couple of $R$-submodules $X$, $Y$ of $K$ such that $X\sub Y$ and each  $M\in \Max(R)$ \cite[Lemma 2.3]{BS2}. Thus by Proposition \ref{prop1} we get:

\begin{proposition} \cite[Lemma 3.9]{O3} An $h$-local domain is an LPI-domain.
\end{proposition}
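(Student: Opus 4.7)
The plan is simply to chain the two facts assembled in the sentences immediately preceding the statement. Let $I$ be a locally principal nonzero ideal of an $h$-local domain $R$. By Proposition~\ref{prop1} it suffices to verify the identity $(R:I)_M = (R_M:I_M)$ for every $M \in \Max(R)$, and this is exactly what the cited \cite[Lemma~2.3]{BS2} delivers in the $h$-local setting.

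In carrying this out I would first make the harmless reduction to the case $I \sub R$: pick a nonzero $d \in R$ with $dI \sub R$, note that $dI$ is again locally principal, and observe that $I$ is invertible if and only if $dI$ is (and likewise the colon–localization identity for $I$ is equivalent to the one for $dI$). This reduction places the pair $R, I$ in the containment regime in which the submodule-quotient formula of \cite{BS2} is stated. I would then specialize that formula to $X = R$ and $Y = I$ to obtain $(R:I)_M = (R_M:I_M)$ for every maximal ideal $M$, and invoke Proposition~\ref{prop1} to conclude that $I$ is invertible.

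There is essentially no obstacle here, because the substantive content of the statement has been packaged into \cite[Lemma~2.3]{BS2}, which commutes the colon operation with localization whenever the domain is $h$-local. The only point warranting any care is verifying that the integral reduction places $I$ into the exact form to which the cited lemma applies; once that is done, combining it with Proposition~\ref{prop1} yields the conclusion in one line.
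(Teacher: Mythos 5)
Your proof is correct and is essentially the paper's own argument: the paper obtains the proposition in exactly this one-line fashion, by combining \cite[Lemma 2.3]{BS2} (the colon operation commutes with localization over an $h$-local domain) with Proposition \ref{prop1}. The preliminary reduction to an integral ideal is a harmless elaboration that the paper leaves implicit.
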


Another sufficient condition for the equality  $(R:I)_M=(R_M:I_M)$ is that $I$ be $v$-finite. 

Recall that, if $I$ is a nonzero fractional ideal of a domain $R$, the \emph{divisorial closure} of $I$ is  the star operation defined by $I_v:=(R:(R:I))$. The ideal $I$ is called \emph{divisorial} if $I=I_v$ and a divisorial ideal is called \emph{$v$-finite} if $I=J_v$ for some finitely generated ideal $J$. Invertible ideals are divisorial. An introduction to star operations and divisorial closure is  \cite[Section 44]{gilmer}.

Now, observing that $(R:J_v)=(R:J)$, if $I=J_v$, with $J$ finitely generated, we have 
$$(R:I)_M \sub (R_M:I_M) \sub (R_M:J_M)=(R:J)_M=(R:I)_M.$$
Hence $(R:I)_M = (R_M:I_M)$.

\begin{proposition} \cite[Theorem 2.1]{A} \label{And1} Let $I$ be a nonzero  locally principal ideal of $R$.  Then $I$ is invertible if and only if $I$ is divisorial $v$-finite.
\end{proposition}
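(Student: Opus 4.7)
The plan is to prove the two implications separately, using tools that the excerpt has already set up, so neither direction should require any substantively new argument.

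For the forward direction, suppose $I$ is invertible. By the discussion immediately following Proposition~\ref{inv}, $I$ is automatically finitely generated. Moreover, the excerpt explicitly notes that invertible ideals are divisorial, so $I=I_v$. Taking $J:=I$ itself as a finitely generated ideal with $I=J_v$, this exhibits $I$ as divisorial and $v$-finite, and this direction is done.

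For the reverse direction, suppose $I$ is locally principal, divisorial, and $v$-finite, so that $I=J_v$ for some finitely generated ideal $J$. The idea is to reduce the claim to Proposition~\ref{prop1}: I need to verify that $(R:I)_M=(R_M:I_M)$ for each $M\in\Max(R)$. But this is exactly the computation carried out in the paragraph preceding the statement, which uses the identity $(R:J_v)=(R:J)$ together with the fact that localization commutes with forming $(R:J)$ when $J$ is finitely generated; the sandwich
$$(R:I)_M\sub (R_M:I_M)\sub (R_M:J_M)=(R:J)_M=(R:I)_M$$
forces equality. Once the hypothesis of Proposition~\ref{prop1} is met, that proposition immediately gives invertibility of $I$.

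Thus the whole argument is really a packaging of results already displayed: the nontrivial input for ``$\Leftarrow$'' is the compatibility of divisorial closure and localization for finitely generated ideals, which has already been recorded, and the nontrivial input for ``$\Rightarrow$'' is that invertibility entails finite generation. There is no genuine obstacle; the only mild care needed is to keep straight that ``$v$-finite'' is defined only for divisorial ideals, so the hypothesis in the backward direction is correctly read as ``$I$ is divisorial and equals $J_v$ for some finitely generated $J$,'' which is what allows the computation above to go through.
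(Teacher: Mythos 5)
Your proof is correct and follows exactly the paper's own argument: the forward direction uses that invertible ideals are divisorial and finitely generated, and the backward direction reduces to Proposition~\ref{prop1} via the sandwich $(R:I)_M\sub (R_M:I_M)\sub (R_M:J_M)=(R:J)_M=(R:I)_M$ established just before the statement. Nothing to add.
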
 
\begin{proof} Any invertible ideal is divisorial  an finitely generated, hence $v$-finite. Conversely, if $I=J_v$, with $J$ finitely generated, as above we have $(R:I)_M = (R_M:I_M)$. Thus, if $I$ is also locally principal, $I$ is invertible by Proposition \ref{prop1}.
\end{proof}

A class of domains where each divisorial ideal is $v$-finite is the class of Mori domains.
 $R$ is called a \emph{Mori domain} if it satisfies the ascending chain condition on divisorial integral ideals. Clearly Noetherian domains and Krull domains are Mori. For the main properties of Mori domains the reader is referred to \cite{Bar}. 

From Proposition \ref{And1}, we immediately see that:

\begin{proposition} A Mori domain is an LPI-domain.
\end{proposition}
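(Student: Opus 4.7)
The plan is to invoke Proposition~\ref{prop1}: for a nonzero locally principal ideal $I$ of a domain $R$, it suffices to check that $(R:I)_M=(R_M:I_M)$ for every $M\in\Max(R)$. The idea is to use the Mori hypothesis to substitute $I$ by a finitely generated ideal with the same divisorial closure, so that the flatness manipulation already carried out in the paragraph preceding Proposition~\ref{And1} becomes applicable.

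First I would record the standard consequence of the ascending chain condition on divisorial integral ideals: for every nonzero ideal $I$ of a Mori domain $R$, there exists a finitely generated $J\sub I$ with $J_v=I_v$. Indeed, the family $\{K_v:K\sub I,\ K\text{ finitely generated nonzero}\}$ has a maximal element by the Mori hypothesis, and this maximal element must coincide with $I_v$, since otherwise one could adjoin to $K$ any element of $I$ lying outside $K_v$ and obtain a strictly larger member of the family.

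Next, from $J_v=I_v$ and the identity $(R:L)=(R:L_v)$ valid for any nonzero ideal $L$, I obtain $(R:I)=(R:J)$. Localizing at a maximal ideal $M$ and using that $J$ is finitely generated (so that forming $(R:J)$ commutes with localization by flatness of $R_M$ over $R$), this becomes $(R:I)_M=(R:J)_M=(R_M:J_M)$. The inclusion $J\sub I$ gives $(R_M:I_M)\sub(R_M:J_M)$, while the inclusion $(R:I)_M\sub(R_M:I_M)$ is always valid. Chaining these,
$$(R:I)_M\sub(R_M:I_M)\sub(R_M:J_M)=(R:I)_M,$$
forcing equality throughout, and in particular $(R:I)_M=(R_M:I_M)$. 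Proposition~\ref{prop1} then yields that $I$ is invertible.

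The only nontrivial ingredient is the first step, the existence of a finitely generated $J\sub I$ with $J_v=I_v$; this is a basic structural feature of Mori domains. Everything else is the same flatness calculation that underpins Proposition~\ref{And1}, and the argument can be summarized as saying that in a Mori domain every ideal is $v$-equivalent to a finitely generated one, which is precisely what is needed to force local principality to upgrade to invertibility.
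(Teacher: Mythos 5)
Your argument is correct, and at bottom it is the same mechanism the paper uses: replace $I$ by a finitely generated $J$ with the same $v$-closure, run the flatness computation to get $(R:I)_M=(R_M:I_M)$, and conclude by Proposition \ref{prop1}. There is one substantive point where your version is actually more careful than the paper's one-line deduction. The paper obtains the result ``immediately'' from Proposition \ref{And1}, whose hypothesis is that the locally principal ideal be \emph{divisorial} and $v$-finite; since a locally principal ideal need not be divisorial a priori, that citation silently requires exactly the extra step you supply, namely that in a Mori domain \emph{every} nonzero ideal $I$ (divisorial or not) admits a finitely generated $J\sub I$ with $J_v=I_v$, so that $(R:I)=(R:J)$ and the chain $(R:I)_M\sub(R_M:I_M)\sub(R_M:J_M)=(R:J)_M=(R:I)_M$ still closes up. Your maximality argument producing such a $J$ is the standard one and is sound: the family $\{K_v:\ K\sub I\ \text{finitely generated nonzero}\}$ is directed, a maximal element exists by the ascending chain condition, and if it omitted some $x\in I$ it could be strictly enlarged. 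The only detail worth adding is that the ascending chain condition in the definition of a Mori domain is on \emph{integral} divisorial ideals, so one should first scale $I$ into $R$ (which changes neither local principality nor invertibility) before invoking it. With that said, your write-up is, if anything, a slightly more complete rendering of the paper's intended proof rather than a different one.
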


Since a local domain is trivially an LPI-domain,  each domain is intersection of LPI-domains. Thus, another way of approaching Problem \ref{Q1} is to investigate when an intersection of LPI-domains with the same quotient field  is an LPI-domain. It turns out that this is true if the intersection has finite character.

\begin{theorem} \cite[Theorem 4]{AZ} Let $\{R_\al\}$ be a family of overrings of $R$ such that $R=\bigcap R_\al$. If each $R_\al$ is an LPI-domain and the intersection has finite character, then $R$ is an LPI-domain.
\end{theorem}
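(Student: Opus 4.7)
The plan is to verify the criterion of Proposition~\ref{prop1}, namely that $(R:I)_M = (R_M:I_M)$ for every maximal ideal $M$ of $R$. Fix such an $M$ and choose $a \in I$ with $I_M = aR_M$; this is possible because $I_M = IR_M$ is principal and generated by elements of $I$. Since $(R_M:I_M) = a^{-1}R_M$, the nontrivial inclusion reduces to producing some $s \in R \sm M$ with $sI \sub aR$: such an $s$ yields $s/a \in (R:I)$, and hence $a^{-1} = (s/a)/s \in (R:I)_M$.

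To produce $s$, invoke finite character: $a$ is a non-unit in only finitely many $R_\al$, say $R_{\al_1}, \ldots, R_{\al_n}$, and for every other $\al$ one has $aR_\al = R_\al$, hence $IR_\al = R_\al$. For each $\al_i$, $IR_{\al_i}$ is locally principal in $R_{\al_i}$ (any maximal $N$ of $R_{\al_i}$ dominates $R_P$ for $P = N\cap R$, in which $I$ is already principal), so by the LPI hypothesis on $R_{\al_i}$ it is invertible and therefore finitely generated; pick generators $c_{i,1}, \ldots, c_{i,k_i} \in I$ of $IR_{\al_i}$. Set $J = (a, \{c_{i,j}\})R \sub I$, a finitely generated subideal.

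The key step is the identity $(aR:I) = (aR:J)$. The inclusion $\sub$ is immediate from $J \sub I$; for the reverse, the essential tool is $aR = \bigcap_\al aR_\al$ (which follows from $R = \bigcap R_\al$ and $a \neq 0$). Given $x \in (aR:J)$ --- so $x \in R$, since $xa \in aR$ --- and $b \in I$, one verifies $xb \in aR_\al$ for each $\al$: this is trivial when $aR_\al = R_\al$, and when $\al = \al_i$ one expands $b = \sum_j c_{i,j} r_j$ in $R_{\al_i}$ and uses $xc_{i,j} \in aR$ to get $xb \in aR \cdot R_{\al_i} \sub aR_{\al_i}$. Once the identity $(aR:I) = (aR:J)$ is in hand, the proof concludes quickly: since $J$ is finitely generated and $J \sub I \sub aR_M$, each $c_{i,j}$ is cleared into $aR$ by some $s_{i,j} \in R \sm M$, and $s = \prod s_{i,j} \in R \sm M$ satisfies $sJ \sub aR$, giving $s \in (aR:J) = (aR:I)$, as required. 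I expect the main obstacle to be exactly the verification of $(aR:I) = (aR:J)$: this is the one place where the finite character hypothesis (forcing $J$ finitely generated) and the LPI property of each $R_{\al_i}$ (supplying the $c_{i,j}$) genuinely interact.
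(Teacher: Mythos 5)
Your proof is correct, and it follows the same skeleton as the paper's — finite character isolates finitely many relevant $R_{\al_i}$, the LPI property of each forces $IR_{\al_i}$ to be finitely generated, and one assembles a finitely generated $J\sub I$ with $JR_\al=IR_\al$ for every $\al$ — but you finish differently. The paper invokes the star operation $A\mapsto\bigcap_\al AR_\al$ to assert $I=J_v$ and then cites Proposition \ref{And1}, whereas you avoid the $v$-operation entirely: you prove $(aR:I)=(aR:J)$ by hand from the identity $aR=\bigcap_\al aR_\al$ and then verify the criterion of Proposition \ref{prop1} directly at each maximal ideal. Your version is more elementary and self-contained, and it also supplies details the paper elides: why $IR_{\al_i}$ is locally principal over $R_{\al_i}$ (via $(R_{\al_i})_N\supseteq R_{N\cap R}$), and why including the fixed element $a$ among the generators of $J$ guarantees $JR_\al=R_\al$ at all the $\al$ where $a$ is a unit. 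What the paper's route buys is brevity and generality (the same argument transfers to other star operations of finite type, as used later in the survey); what yours buys is transparency — the reader sees exactly where finite character and the LPI hypothesis on the $R_{\al_i}$ enter. One cosmetic caveat: you implicitly take $I$ integral, which is harmless after scaling, and you should note (as you essentially do) that when $IR_{\al_i}=R_{\al_i}$ the finite generation by elements of $I$ is immediate rather than a consequence of the LPI property.
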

\begin{proof} Let $I$ be a locally principal nonzero ideal and let $R_i:=R_{\al_i}$, $i=1, \dots, n$, be the overrings of the family containing $I$. Since $R_i$ is an LPI-domain, $IR_i$ is finitely generated, for each $i=1, \dots, n$. Thus, by the finite character, we can find a finitely generated ideal $J\sub I$ such that $JR_\al=IR_\al$, for each $\al$. By the properties of star operations, this implies that $I=J_v$. Thus $I$ is invertible by Proposition \ref{And1}.
\end{proof}

Now, let us introduce some more notation.
 The $t$-\emph{operation} is the star operation of finite type associated to the divisorial closure and it is therefore defined by setting 
$$I_t:=\bigcup\{J_v;   \mbox{ $J$ finitely generated and $J\sub I$}\},$$
for each nonzero ideal $I$ of $R$. If $I=I_t$, $I$ is called a \emph{$t$-ideal}. Note that $I_t\sub I_v$; thus each divisorial ideal is a $t$-ideal and $I_t=I_v$ if $I$ is finitely generated.

By Zorn's Lemma each $t$-ideal is contained in
a $t$-maximal ideal, which is prime, and we have 
$$R=\bigcap \{R_{M};\; M \mbox{ $t$-maximal ideal}\}.$$ 
We say that $R$ has \emph{$t$-finite character} if this intersection has finite character.

\begin{corollary} \label{FC} \cite[Corollary 1]{AZ}, \cite[Proposition 1.3]{PT}
A domain with finite character or $t$-finite character  is an LPI-domain.
\end{corollary}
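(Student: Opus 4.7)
The plan is to deduce this corollary directly from the preceding theorem of Anderson--Zafrullah, by choosing in each case a natural family of overrings of $R$ whose intersection gives back $R$ and which consists of LPI-domains.

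For the finite character case, I would take the family $\{R_M\}_{M \in \Max(R)}$. Each $R_M$ is a quasi-local domain, and the paper has already observed that a local domain is trivially an LPI-domain (its unique maximal ideal is the only localization site, so ``locally principal'' already means principal, and principal nonzero ideals are invertible). By hypothesis, $R = \bigcap_{M\in \Max(R)} R_M$ has finite character. The theorem therefore applies and gives that $R$ is an LPI-domain.

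For the $t$-finite character case, I would instead use the family $\{R_M\}_{M \in \tmax(R)}$. The text has recorded the representation $R = \bigcap \{R_M : M \text{ $t$-maximal}\}$, and the $t$-finite character assumption is precisely that this intersection has finite character. As each $R_M$ is again quasi-local and hence an LPI-domain, a second application of the theorem yields the conclusion.

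I do not anticipate a real obstacle: both cases reduce to verifying the hypotheses of the theorem, which are immediate once one recognizes that localizations at (ordinary or $t$-) maximal ideals are local and hence trivially LPI. The only point worth mentioning explicitly in the write-up is the triviality ``local $\Rightarrow$ LPI,'' since the corollary's entire content is otherwise the combination of the theorem with the two standard presentations $R=\bigcap R_M$.
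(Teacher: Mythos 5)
Your proof is correct and is exactly the deduction the paper intends: the corollary is stated immediately after the Anderson--Zafrullah intersection theorem, and the paper has already recorded both that a local domain is trivially an LPI-domain and the representation $R=\bigcap\{R_M;\ M\ t\text{-maximal}\}$, so applying the theorem to the two families $\{R_M\}_{M\in\Max(R)}$ and $\{R_M\}_{M\ t\text{-maximal}}$ is precisely the intended argument.
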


Since Mori domains have $t$-finite character \cite[Theorem 3.3]{Bar},
from Corollary \ref{FC} we recover that, as seen above,  $h$-local domains and Mori domains have the LPI-property.

Observe that a locally principal nonzero  ideal that is contained just in finitely many maximal ideals need not be invertible. In fact, as already observed, a maximal ideal of an almost Dedekind domain is always locally principal but need not be finitely generated. A sufficient condition for a locally principal nonzero  ideal $I$ to be finitely generated is that $I$ contains an element belonging  to only finitely many maximal ideals \cite[Lemma 37.3]{gilmer}. In addition, if $I$ contains such an element, $I$ can be generated by two elements \cite[Corollary 1]{GH}.
This is a direct way of proving that the finite character implies the LPI-property. 

However,  the LPI-property does not imply neither the finite character nor the $t$-finite character. In fact any Noetherian domain is an LPI-domain, but need not have finite character.  On the other hand,  there are examples of local domains (hence, LPI-domains) with infinitely many $t$-maximal ideals with nonzero intersection (hence, without $t$-finite character)\cite[Example 1.13]{FPT}.
Thus we ask:

\begin{Qu} \label{Qt} When the  LPI-property implies the $t$-finite character?
\end{Qu}

\begin{Qu} \label{Q2} When the  LPI-property implies the finite character? \end{Qu}

To study Problem \ref{Qt}, it is natural to  generalize the LPI-property by means of the $t$-operation.

Recall that a nonzero ideal $I$ of $R$ is \emph{$t$-invertible} if there is an ideal $J$ such that $(IJ)_t=R$. A digression on $t$-invertibility can be found in \cite{Z}.

To our purpose, we recall that $t$-invertible ideals have a characterization similar to the one given in Proposition \ref{inv} for invertible ideals. That is, a $t$-ideal $I$ is $t$-invertible if and only if is $v$-finite (i.e., $I=J_v$ with $J$ finitely generated) and is $t$-locally principal (i.e., $IR_M$ is principal, for each $t$-maximal ideal $M$) \cite[Theorems 1.1 and 2.2]{Z}. 

We say that $R$ has the  \emph{$t$LPI-property} if each $t$-ideal which is $t$-locally principal is $t$-invertible. 
 The $t$LPI-property implies the LPI-property \cite[Proposition 2.2]{FPT}.
 
A \emph{Pr\"ufer domain} is a domain whose localizations at prime ideals are valuation domains, equivalently, a domain with the property that finitely generated  nonzero ideals are invertible \cite[Chapter IV]{gilmer}. The $t$-analog of Pr\"ufer domains are the so called \emph{Pr\"ufer $v$-multiplication domains}, for short P$v$MDs, defined by the property that each localization at a prime $t$-ideal is a valuation domain, equivalently, by the property that each finitely generated nonzero  ideal is $t$-invertible \cite{MZ}.

Since almost Dedekind domains are Pr\"ufer, not all Pr\"ufer domains are LPI-domains. S. Bazzoni conjectured that Pr\"ufer domains with the LPI-property were of finite character \cite[p. 630]{B1}. 
This conjecture was then proved by W. Holland, J. Martinez, W. McGovern and M. Tesemma in \cite{HM}. (A simplified proof is in \cite{McG}). F. Halter-Koch gave independently another proof, in the more general context of  ideal systems. In particular, he proved the analog of Bazzoni's conjecture for P$v$MDs, that is: a P$v$MD with the $t$LPI-property has $t$-finite character  \cite[Theorem 6.11]{HK}. With different methods, the same result was obtained by M. Zafrullah  \cite[Proposition 5]{Za}.

Other contributions to Problem \ref{Qt} were given by C.A. Finocchiaro, G. Picozza and F. Tartarone. In \cite{FPT}, they extended  several results of Halter-Koch and Zafrullah in the context of star operations of finite type and proved a suitable generalization of Bazzoni's conjecture for $v$-coherent domains. 
Recall that 
$R$ is called \emph{$v$-coherent} if, whenever  $I$ is finitely generated, $(R:I)$ is $v$-finite. The class of $v$-coherent domains properly include coherent domains, Mori domains and P$v$MDs \cite[Example 1]{N}, \cite{GaH}.

Since we do not know any example of $t$LPI-domain without $t$-finite character, we ask:

 \begin{Qu} \cite[p. 657]{Za}Ê\label{Qt2} Assume that $R$ is a $t$LPI-domain. Is it true that $R$ has $t$-finite character?
\end{Qu}

Besides that for P$v$MDs, we know that the answer is positive when each prime $t$-ideal of $R$ is contained in a unique $t$-maximal ideal \cite[Proposition 5.7]{GP1}.
 
Now we go back to the LPI-property. The following theorem gives useful characterizations of domains with finite character and of LPI-domains.

\begin{theorem} \label{teoFPT} 
\begin{enumerate}

\item[(1)]  \cite[Proposition 1.6]{FPT}, \cite[Corollary 4]{ZD} $R$ has finite character if and only if  any family of pairwise  comaximal  finitely generated ideals with nonzero intersection is finite.

\item[(2)]Ê\cite[Theorem 1]{Za}  If $R$ is an LPI-domain,  any family of pairwise  comaximal invertible ideals with nonzero intersection is finite.
\end{enumerate}
\end{theorem}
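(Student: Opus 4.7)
For (1), the forward direction is immediate: if $R$ has finite character and $\{I_\al\}$ is a pairwise comaximal family of finitely generated ideals with $0\neq a\in \bigcap_\al I_\al$, then each proper $I_\al$ lies in some maximal ideal $M_\al\ni a$, and by pairwise comaximality these $M_\al$ are distinct. Since only finitely many maximal ideals contain $a$, the family is finite. The converse, done by contrapositive, is more delicate: assuming $R$ lacks finite character, pick $0\neq a$ in infinitely many maximal ideals $M_1,M_2,\dots$ and construct inductively finitely generated ideals $J_n=(a,c_n)\sub M_n$ that are pairwise comaximal, the elements $c_n$ arising from a careful application of CRT to the finite collection $M_1,\dots,M_n$. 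This is the technical heart of the argument and follows [ZD, Corollary 4].

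For (2), suppose for contradiction that $\{I_\al\}$ is an infinite pairwise comaximal family of invertible ideals with $0\neq a\in\bigcap_\al I_\al$ (note that pairwise comaximality forces every $I_\al$ to be integral), and for each $\al$ set $L_\al:=aI_\al^{-1}\sub R$. Since $1\in I_\al^{-1}$, we have $(a)\sub L_\al$. Define $L:=\sum_\al L_\al$. The key claim is that $L$ is locally principal. Fix a maximal ideal $N$; pairwise comaximality implies at most one $I_\al$ is contained in $N$. If none is, then $(I_\al)_N=R_N$ for all $\al$, so $(L_\al)_N=aR_N$ and hence $L_N=aR_N$. If exactly one $I_\be\sub N$, write $(I_\be)_N=(i_\be)R_N$ with $i_\be\in NR_N$; then $(L_\be)_N=(a/i_\be)R_N\supseteq aR_N$ while $(L_\al)_N=aR_N$ for $\al\neq\be$, so $L_N=(a/i_\be)R_N$. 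In either case $L_N$ is principal.

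By the LPI hypothesis, $L$ is invertible, hence finitely generated, so $L=L_{\al_1}+\cdots+L_{\al_n}$ for some finite subset $\{\al_1,\dots,\al_n\}$ of indices. Since the family is infinite, choose $\be\notin\{\al_1,\dots,\al_n\}$ and a maximal ideal $N\supseteq I_\be$. By pairwise comaximality $I_{\al_i}\not\sub N$ for every $i$, so $L_N=\sum_i (L_{\al_i})_N=aR_N$. On the other hand $L_\be\sub L$ gives $(a/i_\be)R_N\sub aR_N$, which cancels in the domain $R_N$ to force $i_\be$ to be a unit in $R_N$, contradicting $i_\be\in NR_N$. The main obstacle is really in the converse of (1), whose inductive CRT construction must simultaneously guarantee pairwise comaximality, finite generation, and membership in the prescribed maximal ideals; once the auxiliary ideal $L$ is identified, part (2) reduces to a short local-to-global application of the LPI property.
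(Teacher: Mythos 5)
The survey itself does not prove this theorem --- both parts are quoted from the literature (\cite[Proposition 1.6]{FPT}, \cite[Corollary 4]{ZD} and \cite[Theorem 1]{Za}) --- so your attempt must stand on its own. Your part (2) is correct and complete, and it is essentially the argument of \cite{Za}: form $L=\sum_\al aI_\al^{-1}$, observe that at most one $I_\al$ can be contained in a given maximal ideal $N$ so that $L$ is locally principal, invoke the LPI-property to get $L$ invertible hence finitely generated, trap $L$ in a finite subsum, and derive a contradiction by cancelling $a$ in the domain $R_N$. The local computation $(aI_\al^{-1})_N=a(R_N:(I_\al)_N)$ is legitimate because invertible ideals are finitely generated, so $(R:I_\al)_N=(R_N:(I_\al)_N)$ by flatness. (You should add the standing convention that the ideals of the family are proper --- otherwise infinitely many copies of $R$ give a trivial counterexample --- and properness is also what guarantees the existence of the maximal ideal $N\supseteq I_\be$ that you use.)

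The gap is in the converse of part (1). The scheme you sketch --- choose $c_n$ by CRT relative to $M_1,\dots,M_{n-1}$ and set $J_n=(a,c_n)$ --- does not yield pairwise comaximal ideals. CRT controls the behaviour of $c_n$ only at the finitely many maximal ideals already listed, whereas $J_n+J_m=R$ requires that \emph{no} maximal ideal of $R$ contain $a$, $c_n$ and $c_m$ simultaneously; since $a$ lies in infinitely many maximal ideals outside any finite list (and these need not be finitely generated or otherwise controllable), nothing prevents $(a,c_n,c_m)$ from being proper. This is exactly why \cite{ZD} and \cite{FPT} need a genuinely different and more delicate argument, and it is the substantive content of part (1). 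As written, your proof of this direction amounts to citing \cite[Corollary 4]{ZD} --- which is what the survey does anyway --- but the heuristic you attach to the citation describes a construction that fails, and would mislead a reader into thinking the statement is elementary.
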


By means of this theorem, to answer Problem \ref{Q2} one can try the following strategy. Assume, by way of contradiction, that $R$ does not have finite character and consider an infinite family  of maximal ideals, or an infinite family of pairwise  comaximal  finitely generated ideals, with nonzero intersection. When, from one of these families, it is possible to recover an infinite family of pairwise  comaximal  invertible ideals with nonzero intersection, then $R$ cannot be an LPI-domain.

Since  finitely generated nonzero ideals of Pr\"ufer domains are invertible, in this way immediately we get:

\begin{corollary} [Bazzoni's Conjecture] A Pr\"ufer LPI-domain has finite character.
\end{corollary}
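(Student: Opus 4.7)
The plan is to argue by contradiction, combining the two halves of Theorem \ref{teoFPT}. Suppose $R$ is a Pr\"ufer domain with the LPI-property that does \emph{not} have finite character. The strategy outlined immediately before the corollary tells us exactly what to look for: an infinite family of pairwise comaximal invertible ideals with nonzero intersection, which contradicts Theorem \ref{teoFPT}(2).

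First I would invoke Theorem \ref{teoFPT}(1) in its contrapositive form. Since $R$ fails to have finite character, the characterization given there produces an infinite family $\{J_\lambda\}$ of pairwise comaximal finitely generated ideals such that $\bigcap_\lambda J_\lambda \neq (0)$. This is the purely ``finite character'' input to the argument, and it makes no use of the Pr\"ufer hypothesis.

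Next I would use the defining property of Pr\"ufer domains to upgrade the family. By \cite[Chapter IV]{gilmer}, in a Pr\"ufer domain every nonzero finitely generated ideal is invertible. The $J_\lambda$ are nonzero (their intersection is nonzero) and finitely generated, hence each $J_\lambda$ is invertible. The resulting family is still pairwise comaximal and has the same nonzero intersection, so we now have an infinite family of pairwise comaximal \emph{invertible} ideals with nonzero intersection.

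Finally I would apply Theorem \ref{teoFPT}(2) to the assumption that $R$ is an LPI-domain: such a family must be finite, contradicting what we just produced. Hence $R$ has finite character. There is no real obstacle in this last step of the argument; all the difficulty has been absorbed into Theorem \ref{teoFPT}(2), whose proof encodes the main content of Bazzoni's conjecture. The Pr\"ufer hypothesis enters only as the bridge between ``finitely generated'' in part (1) and ``invertible'' in part (2).
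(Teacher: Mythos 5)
Your proof is correct and follows exactly the paper's own argument: it negates finite character via Theorem \ref{teoFPT}(1) to obtain an infinite family of pairwise comaximal finitely generated ideals with nonzero intersection, uses the Pr\"ufer hypothesis to make them invertible, and contradicts Theorem \ref{teoFPT}(2). This is precisely the strategy the paper describes in the paragraph preceding the corollary, so there is nothing to add.
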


To go further, we enlarge the class of Pr\"ufer domains to finitely stable domains.

 Recall that an ideal $I$ of $R$ is  called \emph{stable} if $I$ is invertible in its endomorphism ring $E(I):=(I:I)$. Invertible ideals are stable.
The  domain $R$ is said to be (\emph{finitely}) \emph{stable} if each nonzero (finitely generated) ideal $I$ is stable. 
Stability was thoroughly investigated by B. Olberding  \cite{O3, O1, O2}. In particular, he proved that a domain $R$ is stable if and only if it has finite character and $R_M$ is stable for each $M\in \Max(R)$ \cite[Theorem 3.3]{O2}.

 Of course stability and finite stability coincide for Noetherian domains; but in general these two notions are distinct. In fact, an integrally closed finitely stable domain is precisely a  Pr\"ufer domain. (To see this, just recall that $R$ is integrally closed if and only if $R=(I:I)$ for each finitely generated ideal $I$.) On the other hand, a valuation domain $V$ is stable if and only if $PV_P$ is principal, for each nonzero prime ideal $P$ \cite[Proposition 4.1]{O3}.

This shows that a finitely stable domain need not have neither the LPI-property nor finite character. Also observe that, for finitely stable domains, finite character and $t$-finite character coincide.  
In fact, when $R$ is finitely stable, each maximal ideal is a $t$-ideal; this follows from \cite[Lemma 2.1 and  Theorem 2.4]{tlink} since a finitely stable domain has Pr\"ufer integral closure \cite[Proposition 2.1]{Rush}. 

Thus  one is lead to investigate whether, as in the Pr\"ufer case, a finitely stable LPI-domain has finite character. 

\begin{Qu} \cite[Question 4.6]{B4} \label{LPI} Assume that $R$ is a finitely stable LPI-domain. Is it true that $R$ has finite character?
\end{Qu}

By using Theorem \ref{teoFPT}, we are able to answer this question in positive under some additional hypotheses (see also \cite{G}). 
Observe that when $R$ is Mori, the answer is  trivially affirmative.  In fact Mori domains are LPI-domains (Proposition 7) and Mori finitely stable domains have always finite character, because  Mori domains have $t$-finite character \cite[Theorem 3.3]{Bar} and, as said before, the maximal ideals of a finitely stable domain are $t$-ideals. 

\begin{theorem} \label{LPI-FC} 
Let $R$ be a finitely stable LPI-domain verifying one of the following conditions:
\begin {itemize}
 
\item[(a)] Each nonzero finitely generated  ideal $I$ is principal in $E(I)$.

\item[(b)] Each maximal ideal of $R$ is stable.

\item[(c)] \cite[Lemma 3.9]{O3} Each nonzero prime ideal of $R$ is contained in a unique maximal ideal (e.g., $R$ is one-dimensional).

\item[(d)] Each fractional overring of $R$ is an LPI-domain.
\end{itemize}
Then $R$ has finite character.
\end{theorem}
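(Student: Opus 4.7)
The common framework for all four items is the contradiction argument sketched right after Theorem \ref{teoFPT}. Suppose, for contradiction, that $R$ does not have finite character. By Theorem \ref{teoFPT}(1) there exists an infinite family $\{I_n\}_{n\ge 1}$ of pairwise comaximal, finitely generated, nonzero ideals with a common nonzero element $x\in\bigcap_n I_n$. Since $R$ is finitely stable, each $I_n$ is automatically invertible in its endomorphism overring $E(I_n)=(I_n:I_n)$. The goal in each case is to transfer this invertibility from $E(I_n)$ down to $R$, producing from $\{I_n\}$ an infinite family of pairwise comaximal \emph{$R$-invertible} ideals with a nonzero common element; since $R$ is an LPI-domain, Theorem \ref{teoFPT}(2) then forbids such a family, giving the contradiction.

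The transfer step is where the four hypotheses enter, each in a different way. Under (a), principality $I_n=a_n E(I_n)$ yields the clean identity $I_n(R:I_n)=(R:E(I_n))$; the plan is to exploit this, together with finite stability and the pairwise comaximality of the $I_n$, to show that $I_n$ is itself locally principal, hence invertible in $R$ by Proposition \ref{inv}. Under (b), stability of every maximal ideal should allow an invertible ideal to be extracted from the local data at each maximal containing $I_n$, yielding an $R$-invertible enlargement of $I_n$. Under (c), the uniqueness of the maximal above each nonzero prime, combined with finite generation of $I_n$ and the flatness equality $(R:I_n)_M=(R_M:(I_n)_M)$, should force $E(I_n)_M=R_M$ at every relevant maximal and convert the $E(I_n)$-invertibility of $I_n$ into $R$-invertibility via Proposition \ref{prop1}. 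Under (d), the LPI-property of each fractional overring $E(I_n)$ already makes $I_n$ invertible there, and one then uses a suitable common LPI-overring absorbing all the $E(I_n)$ to pass back to $R$.

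The core obstacle in each case is the asymmetry between the two features that must be preserved: pairwise comaximality is stable only under enlargement of ideals, while a common nonzero element is stable only under shrinking. Every substitution $I_n\leadsto J_n$ must therefore be a carefully controlled enlargement whose local structure remains tight enough to stay locally principal. In case (d) there is the additional subtlety that one should avoid working with a different overring for each index $n$, as that would preclude a uniform contradiction; one instead seeks a single LPI-overring in which the $R$-invertibility of a cofinal subfamily can be recovered simultaneously.
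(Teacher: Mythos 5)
Your overall framework --- argue by contradiction via Theorem \ref{teoFPT}, with each of the four hypotheses entering only in a ``transfer'' step --- is indeed the paper's framework, and you correctly isolate the central difficulty (comaximality survives enlargement, a common nonzero element survives shrinking). But the transfer steps you propose for (a) and (c) would fail, and the device that actually resolves the tension is missing. In (a) you aim to show that $I_n$ itself is locally principal over $R$; this is hopeless in general, since a finitely generated ideal with $I=aE(I)$ and $E(I)\neq R$ is never invertible in $R$ (e.g.\ the maximal ideal of $k[[t^2,t^3]]$ is principal in its endomorphism ring $k[[t]]$ but not invertible). The paper instead \emph{replaces} $I_\al$ by the principal ideal $x_\al R$, using the sandwich $I_\al^2\sub x_\al R\sub I_\al$ that follows from $I_\al^2=x_\al I_\al$: comaximality of the $x_\al R$ comes from the left-hand inclusion, and a common nonzero element is retained by squaring, $y^2\in I_\al^2\sub x_\al R$. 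This squaring trick is precisely the answer to the asymmetry you point out, and nothing in your plan produces it. In (c) the proposed chain does not connect: hypothesis (c) gives no control over $E(I_n)_M$, and even knowing $E(I_n)=R$ would not make $I_n$ locally principal, which is what Proposition \ref{prop1} requires. The paper's arguments for (b) and (c) instead start from a nonzero element $x$ lying in infinitely many maximal ideals $M_\al$ and use the contraction ideals $I_\al:=xR_{M_\al}\cap R$ (which in case (b) contain $M_\al^2$ by stability of $M_\al$): these are contained in no maximal ideal other than $M_\al$, hence are locally principal, hence invertible by the LPI-property --- no appeal to $E(I_\al)$ is made, and in (c) finite stability is not used at all.

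For (d) your instinct to work in a single common overring rather than one overring per index is exactly right; the paper takes $E:=\sum_\al E(I_\al)$. But the two points that make this legitimate are absent from your plan: one must check that $E$ is a ring at all (it lies in the integral closure of $R$, which is Pr\"ufer because $R$ is finitely stable, by Rush's result), and that $E$ is a \emph{fractional} overring (because $xE\sub\sum_\al I_\al\sub R$), so that hypothesis (d) applies to it; the contradiction is then reached inside $E$ by applying Theorem \ref{teoFPT}(2) to the family $\{I_\al E\}$, not by passing invertibility back down to $R$. As it stands, the proposal is a statement of intent whose concrete steps are unworkable in (a) and (c) and not carried out in (b) and (d).
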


\begin{proof} (a) Let $\{I_\al\}$ be a family of pairwise comaximal finitely generated ideals of $R$ with nonzero intersection. By hypothesis, for each fixed $\al$,  $I_\al=x_\al E(I_\al)$, $x_\al\in I_\al$, and so $I_\al^2=x_\al I_\al$. Hence $I_\al^2\sub x_\al R\sub I_\al$. It follows that the principal ideals $x_\al R$ are  pairwise comaximal.  If $y\in R$ is a nonzero element contained in each $I_\al$, then $y^2\in I_\al^2\sub x_\al R$, for each $\al$. By the LPI-property of $R$, the set of indexes $\al$ must be finite (Theorem \ref{teoFPT}(2)). Hence $R$ has finite character (Theorem \ref{teoFPT}(1)).

(b) Let $M$ be a maximal ideal of $R$. Since $M$ is stable,  $MR_M=x(MR_M:MR_M)$ for some $x\in M$ \cite[Lemma 3.1]{O2}. Then $M^2R_M\sub xMR_M\sub xR_M$. 
It follows that the ideal $I:=xR_M\cap R$, containing $M^2$, is not contained in any maximal ideal of $R$ different from $M$. Since $IR_M=xR_M$,  $I$ is locally principal and so it is invertible by the LPI-property. 

Now, suppose that $y$ is a nonzero element of $R$ contained in infinitely many maximal ideals $M_\al$. For each $\al$, consider the ideal $I_\al$ constructed in the preceding paragraph. Then $y^2\in M_\al^2\sub I_\al$, for each $\al$.
Since $\{I_\al\}$ is an infinite family of pairwise comaximal invertible ideals, this contradicts Theorem \ref{teoFPT}(2).

(c) Let $x$ be a nonzero element of $R$ contained in infinitely many maximal ideals $M_\al$ and consider the ideals $I_\al:=xR_{M_\al}\cap R$. By hypothesis, for a fixed $\al$ and maximal ideal $N$, if $N\neq M_\al$,   there is no nonzero prime ideal contained in the intersection $N\cap M_\al$. Thus $M_\al$ is the unique maximal  ideal containing $I_\al$. Since $I_\al R_{M_\al}=xR_{M_\al}$,  $I_\al$ is locally principal and so it is invertible  by the LPI-property. 
It follows that $\{I_\al\}$ is an infinite family of pairwise comaximal invertible ideals containing $x$, in contradiction with Theorem \ref{teoFPT}(2).

(d) Assume that $R$ does not have finite character. Then, by  Theorem \ref{teoFPT}(1), there exists a  nonzero element $x\in R$ which is contained in infinitely many pairwise comaximal finitely generated ideals $I_\al$. Set $E_\al:=(I_\al:I_\al)$ and consider the $R$-module $E:=\sum_\al E_\al$. Since $E$ is contained in the integral closure of $R$ and $R$ is finitely stable, $E$ is an overring of $R$ \cite[Proposition 2.1]{Rush}.
In addition, since $x\in I_\al$ for each $\al$, $xE\sub \sum_\al I_\al\sub R$ and so $E$ is a fractional overring of $R$. 

We claim that $\{I_\al E\}$ is an infinite  family of pairwise comaximal  invertible  ideals of $E$ containing $x$. Since, by hypothesis,  $E$ has the LPI-property, this contradicts  Theorem \ref{teoFPT}(2).

First of all, since $R$ is finitely stable, each ideal $I_\al$ is  stable, so that $I_\al E$ is  invertible in $E$.
Then, since  $E$ is integral over $R$, 
each maximal ideal of $R$ is contained in a maximal ideal of $E$. It follows that $I_\al E\neq E$ for each $\al$.  In addition, for $\al\neq \be$ the ideals $I_\al E$ and $I_\be E$ are comaximal, since the contraction of a prime ideal of $E$ is a prime ideal of $R$.
\end{proof}

A particular case of Theorem \ref{LPI-FC}(b) is when each maximal ideal of $R$ is invertible. In this case, the conclusion follows directly from Theorem \ref{teoFPT}(2). 
We remark that a domain whose prime ideals are all stable is not necessarily stable, even if it is finitely stable \cite[Section 3]{O1}.

By using the techniques developed in \cite{GP1}, it is possible to show that Theorem \ref{LPI-FC} holds in the more general context of star operations spectral and of finite type (see for example \cite[Proposition 5.5]{GP1} and the proof of \cite[Theorem 5.2]{GP1}).

S. Bazzoni noted  that the LPI-property  can be strengthened by the condition that each nonzero ideal $I$ of $R$ that is locally stable (i.e.,  $IR_M$ is stable, for each maximal ideal $M$) is stable. She called this condition the \emph{local stability property}. B. Olberding proved that a domain with finite character has the local stability property \cite[Lemma 4.3]{O3} and S. Bazzoni proved the converse \cite[Theorem 4.5]{B4} for finitely stable domains. We can recover her result from Theorem \ref{LPI-FC} in the following way.

\begin{lemma} \cite[Lemma 3.2]{B4} \label{LSP} A domain with the local stability property is an LPI-domain.
\end{lemma}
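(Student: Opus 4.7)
The plan is to chain together two observations already recorded in the excerpt: first, that a locally principal ideal is locally stable for a trivial reason, and second, that for a locally principal nonzero ideal $I$ the endomorphism ring $E(I)=(I:I)$ collapses to $R$, so ``stable'' forces ``invertible''.

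More concretely, I would start with a locally principal nonzero ideal $I$ of $R$ and fix a maximal ideal $M$. Since $I_M=IR_M$ is principal in $R_M$, it is invertible in $R_M$, and moreover $(I_M:I_M)=R_M$, so $I_M$ is invertible in its own endomorphism ring $(I_M:I_M)$. That is precisely the statement that $I_M$ is stable. Since this holds at every $M\in\Max(R)$, the ideal $I$ is locally stable.

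Applying the local stability property, we conclude that $I$ itself is stable, i.e.\ invertible in $E(I):=(I:I)$. Now I invoke the observation made early in the paper: for any locally principal nonzero ideal $I$,
$$E(I)=(I:I)=\bigcap_{M\in\Max(R)}(I_M:I_M)=\bigcap_{M\in\Max(R)}R_M=R.$$
Therefore $I$ is invertible in $R$, which is exactly what it means to be invertible. Hence $R$ is an LPI-domain.

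There is really no hard step here; the only thing worth being careful about is the verification that principal ideals are stable (so that ``locally principal'' implies ``locally stable''), which reduces to the tautology that $IR_M = xR_M$ is invertible in $R_M = (IR_M:IR_M)$. Everything else is a direct quotation of facts already established in the excerpt.
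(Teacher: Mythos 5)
Your proof is correct and follows essentially the same route as the paper's: locally principal implies locally stable, the local stability property then gives that $I$ is stable, and the identity $E(I)=(I:I)=R$ for locally principal nonzero ideals (already computed in the introduction) turns ``invertible in $E(I)$'' into ``invertible in $R$''. The only cosmetic difference is that the paper re-derives $E(I)=R$ via the finite generation of $I$ over $E(I)$, whereas you quote the earlier computation directly; both are valid.
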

\begin{proof} Assume that $R$ has the local stability property. If $I$ is a nonzero locally principal ideal of $R$, then $I$ is locally stable. Hence $I$ is stable. In particular $I$ is finitely generated in $E(I)$ and so $E(I)_M=(I:I)_M=(I_M:I_M)=R_M$, for each maximal ideal $M$. It follows that $E(I)=R$. Hence $I$ is invertible.
\end{proof}

\begin{proposition} \label{LSover} Let $R$ be a finitely stable domain. If $R$ has the local stability property, each fractional overring of $R$ has the local stability property.
\end{proposition}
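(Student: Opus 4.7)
The plan is to deduce the stability of a given locally stable $T$-ideal $J$ from the LSP of $R$, applied to $J$ itself regarded as an $R$-ideal. The crucial starting observation is that the endomorphism overring $E(J)=(J:J)=\{x\in K:\,xJ\sub J\}$ is computed purely inside the common fraction field $K$ and depends only on the $R$-module structure of $J$, not on whether $J$ is viewed as an ideal of $R$ or of $T$. Consequently ``$J$ is stable in $T$'' is literally the same statement as ``$J$ is stable in $R$''. Choosing a nonzero $d\in R$ with $dT\sub R$ (which exists because $T$ is fractional over $R$) and replacing $J$ by $dJ$, I may assume that $J\sub R$ is an integral $R$-ideal that is simultaneously a $T$-ideal. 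The problem then reduces to verifying that $J$ is locally stable as an $R$-ideal, for then the LSP hypothesis on $R$ immediately yields the required stability.

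Fix $M\in\Max(R)$ and set $T_M:=(R\sm M)^{-1}T$. Because $J$ is a $T$-ideal, $JR_M=JT_M$ as subsets of $K$, and so the $R$-endomorphism overring $(JR_M:JR_M)$ coincides with the $T$-endomorphism overring $(JT_M:JT_M)$. For every prime $Q$ of $T$ disjoint from $R\sm M$ one has $(JT_M)_{QT_M}=JT_Q$, so when $Q$ is in fact a maximal ideal of $T$ the local stability hypothesis on $J$ provides the stability of this localization. Granting the identification of the maximal ideals of $T_M$ with the maximal ideals of $T$ contracting into $M$ — which holds in particular when $T$ is integral over $R$ — one concludes that $JT_M$ is locally stable as a $T_M$-ideal. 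If in addition $T_M$ can be shown to possess the LSP, then $JT_M$ is stable in $T_M$, and the identification of endomorphism rings transfers this stability to $JR_M$ in $R_M$. Running this for every $M\in\Max(R)$ delivers the local stability of $J$ over $R$, and the LSP of $R$ then closes the argument.

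The main obstacle is therefore twofold: controlling the maximal spectrum of the intermediate ring $T_M$ (relating its maximal ideals to those of $T$ sitting over $M$) and showing that $T_M$ itself has the LSP. The natural route is to establish that $T$ has \emph{finite character over $R$}, meaning that only finitely many maximal ideals of $T$ lie above each $M\in\Max(R)$, since such a finite fibering forces $T_M$ to be semi-local and hence automatically of finite character and LSP, by Olberding's implication ``finite character $\ra$ LSP'' \cite[Lemma 4.3]{O3}. Establishing this finite-character condition in the generality of fractional overrings of a finitely stable domain is the delicate technical step; I would approach it by exploiting the finite stability of $R$, the nonzero conductor $(R:T)$ witnessing the fractional hypothesis on $T$, and the Pr\"ufer integral closure $\ol R$ of $R$ to control how maximal ideals of $T$ can sit over a common maximal ideal of $R$.
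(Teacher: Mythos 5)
Your overall strategy coincides with the paper's: reduce the stability of a locally stable $T$-ideal $J$ to the local stability of $J$ as an $R$-ideal by proving that $JT_M$ is stable for every $M\in\Max(R)$, and obtain the latter from the local stability property of the ring $T_M$ once $T_M$ is known to be semilocal. The preparatory steps (stability of $J$ is independent of whether $J$ is viewed over $R$ or over $T$, the identity $JR_M=JT_M$, and the appeal to Olberding's implication ``finite character $\Rightarrow$ local stability property'') are all correct and are exactly what the paper uses.

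The genuine gap is that the one substantive ingredient, the semilocality of $T_M$, is not proved: you explicitly defer it as ``the delicate technical step'' and only indicate a possible attack via the conductor $(R:T)$ and the Pr\"ufer integral closure. The paper obtains it immediately from Olberding's structure theory of finitely stable domains: every overring of a finitely stable domain is finitely stable, and every overring of a \emph{local} finitely stable domain is semilocal \cite[Lemma 2.4 and Corollary 2.5]{O2}. Applied to $T_M$, an overring of the local finitely stable domain $R_M$, this gives semilocality at once; no global statement about the fibers of $\Max(T)\to\Max(R)$ and no use of the conductor is needed. A secondary flaw: you hedge the description of $\Max(T_M)$ by assuming $T$ integral over $R$, but a fractional overring is in general only almost integral over $R$, so that hypothesis is not available. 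It is also unnecessary: every maximal ideal of $T_M$ is of the form $QT_M$ for a prime $Q$ of $T$ contained in some $N\in\Max(T)$, and $(JT_M)_{QT_M}=(JT_N)_{QT_N}$ is a localization of the stable ideal $JT_N$, hence stable (localization preserves invertibility in the endomorphism ring); this is all that the local stability of $JT_M$ over $T_M$ requires.
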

\begin{proof} Let  $E$ be a fractional overring of $R$. Given a maximal ideal $M$ of $R$, $R_M$ is finitely stable and so $E_M$ is semilocal (i.e., it has finitely many maximal ideals); in fact any overring of a finitely stable domain is finitely stable and any overring of a local finitely stable domain is semilocal \cite[Lemma 2.4 and Corollary 2.5]{O2}. 
It follows that if $I$ is a locally stable ideal of $E$, then $IE_M$ is stable for each $M\in \Max(R)$. Hence, by the local stability property of $R$, the ideal $I=IE$ is stable.
\end{proof}

\begin{corollary} \label{corFC} 
Let $R$ be a finitely stable domain. The following conditions are equivalent:
\begin{itemize}
\item[(i)] $R$ has finite character;
\item[(ii)]  $R$ has the local stability property;
\item[(iii)] Each fractional overring of $R$ has the local stability property;
\item[(iv)] Each fractional overring of $R$ is an LPI-domain.
\end{itemize}
\end{corollary}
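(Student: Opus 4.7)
The plan is to close a cycle $(i)\Rightarrow(ii)\Rightarrow(iii)\Rightarrow(iv)\Rightarrow(i)$, with each implication being a direct appeal to a result already established in the excerpt; the finite stability hypothesis is only needed at two specific links.

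For $(i)\Rightarrow(ii)$ I would simply invoke Olberding's \cite[Lemma 4.3]{O3}, which is quoted in the paragraph preceding the corollary: any domain with finite character has the local stability property. No finite stability is needed here. For $(ii)\Rightarrow(iii)$ the finite stability of $R$ enters: I would apply Proposition \ref{LSover} verbatim, which is tailored precisely to transfer the local stability property from a finitely stable $R$ to every fractional overring $E$ of $R$.

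The implication $(iii)\Rightarrow(iv)$ is an overring-by-overring application of Lemma \ref{LSP}: any domain having the local stability property is an LPI-domain, so if every fractional overring has the local stability property then every fractional overring is an LPI-domain. Finally, $(iv)\Rightarrow(i)$ is where the main work has already been done: since $R$ is a fractional overring of itself, condition $(iv)$ makes $R$ itself an LPI-domain, and together with the standing assumption that $R$ is finitely stable this is exactly the hypothesis of Theorem \ref{LPI-FC}(d), whose conclusion is that $R$ has finite character.

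The only step that is not a one-line citation is the last one, and even there the real obstacle, namely constructing an overring $E$ on which the LPI-property yields a contradiction, was overcome inside the proof of Theorem \ref{LPI-FC}(d) by forming $E=\sum_\al E(I_\al)$. So the expected bottleneck for this corollary is simply checking that every invocation is legitimate: that Proposition \ref{LSover} and Theorem \ref{LPI-FC}(d) really do require nothing beyond finite stability of $R$, and that Lemma \ref{LSP} needs no hypothesis at all. Once these are verified, the cycle closes.
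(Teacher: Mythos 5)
Your proposal is correct and follows exactly the same cycle $(i)\Rightarrow(ii)\Rightarrow(iii)\Rightarrow(iv)\Rightarrow(i)$ as the paper, citing the same four results (Olberding's \cite[Lemma 4.3]{O3}, Proposition \ref{LSover}, Lemma \ref{LSP}, and Theorem \ref{LPI-FC}(d)) for the respective implications. Your additional remarks about where finite stability is actually used are accurate and consistent with the paper's argument.
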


\begin{proof} (i) $\ra$ (ii) by \cite[Lemma 4.3]{O3}. (ii) $\ra$ (iii) by Proposition \ref{LSover}. 
(iii) $\ra$ (iv) by Lemma \ref{LSP}. (iv) $\ra$ (i) by Theorem \ref{LPI-FC}(d). 
\end{proof}

 \begin{corollary} \cite[Theorem 4.5]{B4} A finitely stable domain with the local stability property has finite character. 
 \end{corollary}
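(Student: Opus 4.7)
The plan is to recognize this statement as exactly the implication (ii) $\Rightarrow$ (i) of Corollary \ref{corFC}, and to trace the three short steps that compose it. All of the substantive work has already been done in the preceding results, so the proof really consists of stringing the correct chain of citations together and explaining why no extra hypothesis beyond finite stability and the local stability property is needed.

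First I would apply Proposition \ref{LSover} to pass from (ii) to (iii): since $R$ is finitely stable and has the local stability property, every fractional overring $E$ of $R$ inherits the local stability property, the key mechanism being that finite stability forces each localization $E_M$ (at a maximal ideal $M$ of $R$) to be semilocal, so stability of $IE_M$ tested at maximal ideals $M$ of $R$ is enough to get stability at each maximal ideal of $E$. Next I would invoke Lemma \ref{LSP} to pass from (iii) to (iv): each fractional overring, having the local stability property, is an LPI-domain, because a locally principal ideal of such an overring is automatically locally stable, hence stable, hence finitely generated in its endomorphism ring, which then collapses to the overring itself and yields invertibility.

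Finally I would apply Theorem \ref{LPI-FC}(d) to pass from (iv) to (i). The only genuinely nontrivial step, and what I would flag as the main obstacle packaged into the preceding machinery, lies here: one assumes $R$ fails to have finite character, extracts via Theorem \ref{teoFPT}(1) an infinite pairwise comaximal family $\{I_\alpha\}$ of finitely generated ideals sharing a nonzero element $x$, forms $E := \sum_\alpha E(I_\alpha)$ which by finite stability sits in the integral closure of $R$ and by $xE \subseteq R$ is a fractional overring of $R$, and then uses the LPI-property of $E$ (granted by (iv)) together with Theorem \ref{teoFPT}(2) applied inside $E$ to derive a contradiction from the infinite family $\{I_\alpha E\}$ of pairwise comaximal invertible ideals of $E$ containing $x$. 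Once these three implications are assembled, the corollary falls out with no further computation.
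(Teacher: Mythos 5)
Your proposal is correct and follows exactly the paper's own route: the corollary is deduced as the chain (ii) $\Rightarrow$ (iii) $\Rightarrow$ (iv) $\Rightarrow$ (i) of Corollary \ref{corFC}, using Proposition \ref{LSover}, Lemma \ref{LSP}, and Theorem \ref{LPI-FC}(d) in that order. Your summaries of the mechanisms inside each of those three steps also match the paper's arguments.
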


By Corollary \ref{corFC}, we see that Problem \ref{LPI} can be reformulated in the following way:

\begin{Qu}  \label{QuLSI}ÊIs it true that every fractional overring of a finitely stable LPI-domain is an LPI-domain?
\end{Qu}

Note that the local stability property and the LPI-property coincide for Pr\"ufer domains \cite[Proposition 3.3]{B4} and that the answer to Problem \ref{QuLSI} (equivalently, to Problem \ref{LPI}) is positive if and only if these two properties are more generally equivalent for finitely stable domains \cite[Question 6]{B4}. 

%%%%%%%%%%%%%%%%%%%%%%%%%%%%%%%%%%%%%%%%%%%%%%%%

\end{document}